 \theoremstyle{definition}
 \theoremstyle{remark}
 \numberwithin{equation}{section}
\newtheorem{theorem}{Theorem}
\newtheorem{lemma}[theorem]{Lemma}
\newtheorem{proposition}[theorem]{Proposition}
\newtheorem{remark}[theorem]{Remark}
\newcommand{\dis}{\displaystyle}
\newcommand{\R}{\mathbb R}
\numberwithin{equation}{section}
\numberwithin{theorem}{section}
\numberwithin{figure}{section}
\begin{document}

%-------------------------------------------------------------------------
% editorial commands: to be inserted by the editorial office
%
%\firstpage{1} \volume{228} \Copyrightyear{2004} \DOI{003-0001}
%
%
%\seriesextra{Just an add-on}
%\seriesextraline{This is the Concrete Title of this Book\br H.E. R and S.T.C. W, Eds.}
%
% for journals:
%
%\firstpage{1}
%\issuenumber{1}
%\Volumeandyear{1 (2004)}
%\Copyrightyear{2004}
%\DOI{003-xxxx-y}
%\Signet
%\commby{inhouse}
%\submitted{March 14, 2003}
%\received{March 16, 2000}
%\revised{June 1, 2000}
%\accepted{July 22, 2000}
%
%
%
%---------------------------------------------------------------------------
%Insert here the title, affiliations and abstract:
%

\title[H\"{o}lder continuity of solutions to the kinematic dynamo equations]
 {H\"{o}lder continuity of solutions to the kinematic dynamo equations}
 
 %----------Author 1
\author{Susan Friedlander}

\address{Department of Mathematics\\
University of Southern California}

\email{susanfri@usc.edu}

%\thanks{This work was completed with the support of our
%\TeX-pert.}
%----------Author 2
\author{Anthony Suen}

\address{Department of Mathematics\\
University of Southern California}

\email{asuen@usc.edu}
 
\date{March 30, 2014}

\keywords{regularity of solutions to parabolic equations, kinematic dynamo equations}

\subjclass{35K10, 35B65, 76W05} 

\begin{abstract}
We study the propagation of regularity of solutions to a three dimensional system of linear parabolic PDE known as the kinematic dynamo equations. The divergence free drift velocity is assumed to be at the critical regularity level with respect to the natural scaling of the equations.
\end{abstract}

%%% ----------------------------------------------------------------------
\maketitle
%%% ----------------------------------------------------------------------
%\tableofcontents
\section{Introduction}

Dynamo action is the mechanism by which a magnetic field is generated and sustained through the motion of an electrically conducting fluid. One of the goals of dynamo theory is to understand the processes of the magnetic field in certain planets and stars. There is a considerable body of work on this topic in the astrophysical, geophysical and mathematical literature including the classical book of Moffatt \cite{Mof78}. In this context the full coupled system of equations of magnetohydrodynamics is extremely challenging and at present inaccessible, even numerically, in parameter ranges relevant to the geodynamo or the solar dynamo. One approach to simplify the system is to study the so called kinematic dynamo problem. In this approach the Lorentz force is neglected in the evolution equations governing the velocity of a convective, rotating, self gravitationally fluid. The magnetic induction equation then becomes a linear equation with the velocity being externally prescribed from the dynamically driven fluid flow. In reality, such a flow in a planetary interior is expected to be highly nontrivial with convective plumes and cells associated with chaotic particle paths. Over the past decades various models for kinematic dynamos have been proposed and studied: for example, Childress and Soward \cite{CS72}, Soward and Childress \cite{SC86}, Jones and Roberts \cite{JR00}. The importance of exponential stretching of the fluid trajectories for dynamo action was observed by Arnold \cite{A82}, Bayly \cite{B86}, Vishik \cite{V89} and Friedlander and Vishik \cite{FV91}. We note that flows with exponential stretching possess strong stochastic properties. More recent numerical treatments of the kinematic dynamo equations include those of Ponty et al \cite{PGS01}, Favier and Proctor \cite{FP13}. Even though the preponderance of results in this area have been restricted to the case of steady fluid flows there are some interesting articles where the velocities are taken to be non-stationary, for example Molchanov et al \cite{MRS85} and Finn and Ott \cite{FO88} who related the dynamo growth rate with quantitative measures of chaos in the flow. We note that although the kinematic dynamo equations are more tractable when the prescribed velocity is taken to be steady, a more realistic picture of a planetary interior would be a highly complex, unsteady flow. With this in mind, in our present paper we examine certain regularity properties of the kinematic dynamo equation under fairly minimal conditions on a prescribed unsteady velocity field. We do not address the question of dynamo instabilities. Rather we prove that any initial regularity of the magnetic field is guaranteed to persist even when it is deformed by non stationary flows of rather ``wild'' behavior. 

We are interested the following system of parabolic equations on $\R^3\times[0,\infty)$:

\begin{align}
\label{1.1} \left\{ \begin{array}{l}
B_t+u\cdot\nabla B=B\cdot\nabla u+\eta\Delta B, \\
\nabla\cdot B=0\\
\end{array}\right.
\end{align}
subjected to the initial condition
\begin{align}\label{1.2}
B(x,0)=B_0(x).
\end{align}
Here $x\in\R^3$ is the spatial coordinate, $t\ge0$ is the time variable and $\eta$ is a positive constant. $B=(B_1(x,t),B_2(x,t),B_3(x,t))$ is an unknown vector-valued function in $\R^3$ and $u=(u_1(x,t),u_2(x,t),u_3(x,t))$ is a prescribed divergence-free vector field. In the context of kinematic dynamo theory, system \eqref{1.1} represents Faraday's law of induction equation in which $u$ is a prescribed velocity vector field and $B$ is the magnetic field. The main goal of the present work is to study the regularity of solutions to \eqref{1.1}-\eqref{1.2} when $u$ is given. More precisely, we observe the persistence of H\"{o}lder continuity for weak solutions to \eqref{1.1}-\eqref{1.2} under minimal regularity assumptions on $u$. 

We recall some results in the literature related to our work. The regularity of {\it scalar} solutions $\theta(x,t)$ to the drift diffusion equation
\begin{align}\label{1.3}
\theta_t+(u\cdot\nabla)\theta=\Delta\theta
\end{align}
is a classical problem in parabolic PDE's: see, for example Lady$\breve{z}$henskaya et al \cite{LSU68}. However proving H\"{o}lder regularity of weak solutions of \eqref{1.3} with divergence free drift as rough as $u\in L^{\infty}_t BMO_{x}^{-1}$ was proved only recently in Friedlander and Vicol \cite{FV11a} and, at the same time, by Seregin et al \cite{SSSZ12}. The issue of regularity for genuinely vector systems such as \eqref{1.1} is even more delicate and rather little is known in norms that are critical with respect to the inherent scaling of the equation. To our knowledge the best result to date is that of Silvestre and Vicol \cite{SV12} who consider the somewhat artificial vector system based on \eqref{1.3} with coupling of the components via a pressure gradient, namely
\begin{align}\label{1.4}
v_t+(u\cdot\nabla)v=\Delta v+\nabla P,\qquad\nabla\cdot v=0.
\end{align}
Their result is a propagation of regularity instead of a regularization result. They prove that if a certain scale invariant norm of the divergence free drift velocity $u$ is bounded, then for $\alpha\in(0,1)$ the $C^{\alpha}$ norm of $v$ at time $t$ is bounded in terms of its norm at time zero. The proof uses Campanato's characterization of the H\"{o}lder modulus of continuity and a maximum principle type argument.

Our present analysis of the kinematic dynamo equation \eqref{1.1} closely follows the approach in Silvestre and Vicol \cite{SV12}. The novelty of our analysis is that we are required to control the ``stretching'' term $B\cdot\nabla u$ which involves a derivative of $u$ as opposed to the pressure gradient in \eqref{1.4}. We illustrate the comment in \cite{SV12} that their method could be applied to other evolution systems; in our case to a model for the solar dynamo.

\medskip

The following is the main result of our work:

\begin{theorem} Let $\beta\in(0,1]$, $T>0$, and define a pair $(p,q)$ by $p=\frac{2}{\beta}$ and $q=\frac{3}{1-\beta}$. Assume that $u:\R^3\times[0,T]\rightarrow\R^3$ is a divergence-free vector field such that
\begin{align}\label{c1}
u\in L^p([0,T];W^{1,q}(\R^3)).
\end{align}
%with $\|u(\cdot,t)\|\le g(t)$ for some $g(t)\in L^p([0,T])$
%where $p=\frac{3}{1-\beta}$ and $q=\frac{2}{1+\beta-\alpha}$,
%\begin{align}\label{c2}
%u\in L^5([0,T];L^5(\R^3)),
%\end{align}
%\begin{align}\label{c3}
%\nabla\cdot u=0. %\qquad(\mbox{in the weak sense}).
%\end{align}
Given $\alpha\in(0,1)$, assume also that $B_0$ satisfies $B_0\in L^2(\R^3)\cap C^{\alpha}(\R^3)$ with $\nabla\cdot B_0=0$. Then there exists a weak solution $B:\R^3\times[0,T]\rightarrow\R^3$ of \eqref{1.1}-\eqref{1.2} such that $B(x,t)$ is $C^{\alpha}$ in $x$ for all $t\in[0,T]$ and we have
\begin{align*}
[B(\cdot,t)]_{C^{\alpha}(\R^3)}\le C[B_0]_{C^{\alpha}(\R^3)},
\end{align*}
for some positive universal constant $C$ depending on $\eta,T,\alpha,\beta$, $\|B_0\|_{L^2(\R^3)}$ and $\|u\|_{L^p([0,T];W^{1,q}(\R^3)}$. 
\end{theorem}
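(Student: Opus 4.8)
The plan is to adapt the Silvestre--Vicol strategy, which proves propagation (not regularization) of $C^\alpha$ regularity via Campanato's characterization of the Hölder seminorm together with a maximum-principle-type estimate. The key quantity to control is the oscillation of $B$ over parabolic cylinders. Concretely, I would fix a point $x_0$ and, for each radius $r$, measure the $L^2$-oscillation of $B(\cdot,t)$ away from a well-chosen constant (for each component), i.e.\ a Campanato-type seminorm $\sup_{x_0,r} r^{-3-2\alpha}\int_{B_r(x_0)} |B-\bar B_{r}|^2$, and show that if this is controlled at time zero it remains controlled for all $t\in[0,T]$. The Campanato equivalence then upgrades the $L^2$ oscillation bound to a pointwise $C^\alpha$ bound $[B(\cdot,t)]_{C^\alpha}\le C[B_0]_{C^\alpha}$.

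\smallskip

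I would carry this out in the following order. First, I would establish existence of a weak solution together with the basic energy estimate controlling $\|B(\cdot,t)\|_{L^2}$ in terms of $\|B_0\|_{L^2}$ and $\|u\|_{L^p([0,T];W^{1,q})}$; here the critical scaling of the pair $(p,q)=(\tfrac2\beta,\tfrac3{1-\beta})$ is exactly what makes the drift and stretching terms absorbable by the dissipation $\eta\Delta B$, via a Ladyzhenskaya--Prodi--Serrin-type computation. Second, I would set up the localized energy inequality for $B$ on parabolic cylinders, testing the equation against $B$ cut off by a suitable spatial cutoff; the new feature relative to \eqref{1.4} is that the stretching term $B\cdot\nabla u$ replaces the pressure gradient $\nabla P$. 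Third, I would run the iteration over dyadic scales: one shows that the rescaled oscillation either decays at a fixed geometric rate under the dissipation, or is already controlled by the forcing coming from $u$, and that these forcing contributions are summable precisely because $(p,q)$ sits at the critical Lebesgue exponents so the relevant integrals are scale-invariant. Finally, a continuity/bootstrap argument in time closes the estimate uniformly on $[0,T]$.

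\smallskip

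The main obstacle I expect is controlling the stretching term $B\cdot\nabla u$. Unlike the pressure gradient in \eqref{1.4}, which is nonlocal but does not cost a derivative on $u$, this term involves $\nabla u$ directly, so in the localized energy estimate one must bound $\int (B\cdot\nabla u)\cdot B\,\varphi$ by a combination of the Campanato oscillation energy of $B$ and a controllable amount of the dissipation. The delicate point is that $\nabla u\in L^p_t L^q_x$ with $(p,q)$ critical gives, after Hölder and the Sobolev/Gagliardo--Nirenberg inequalities applied to $B$, exactly an endpoint balance: the powers of $r$ and the time integrability must match so that the contribution is either absorbed into $\eta\int|\nabla B|^2\varphi$ or appears as a genuinely small (or summable) perturbation in the dyadic iteration. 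Verifying that the borderline exponents close---rather than losing by an endpoint---is where the hypothesis \eqref{c1} is used in an essential way, and is the crux of the argument.
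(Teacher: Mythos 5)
Your high-level framing is right---Campanato characterization, an energy estimate in $L^2$, and the stretching term $B\cdot\nabla u$ as the main new difficulty---but the mechanism you propose for closing the H\"older estimate has a genuine gap. You suggest a localized energy inequality on parabolic cylinders followed by a dyadic iteration in which the oscillation ``either decays at a fixed geometric rate under the dissipation, or is already controlled by the forcing,'' with the forcing contributions ``summable precisely because $(p,q)$ sits at the critical Lebesgue exponents so the relevant integrals are scale-invariant.'' This is self-defeating: at critical scaling the per-scale contribution of the drift and stretching terms is scale-\emph{invariant}, i.e.\ the same size at every dyadic scale, and a constant summed over infinitely many scales diverges. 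This is exactly why critical drift does not yield a regularization result, and a De Giorgi-type iteration over scales does not close here. The paper never sums over scales. Instead it runs the Silvestre--Vicol first-contact argument on the weighted Campanato functional $\mathcal I(x,t,r)=\int_{\mathcal B_1(0)}|B(x+ry,t)-\bar B(x,t,r)|^2\phi(y)\,dy$ \emph{uniformly in $(x,r)$}: one posits $\mathcal I<f(t)^2r^{2\alpha}$, and at a hypothetical first touching point uses $\nabla_x\mathcal I=0$, $\Delta_x\mathcal I\le 0$, $\partial_r\mathcal I=2\alpha f^2r^{2\alpha-1}$ to extract a dissipation term $-C_2r^{2\alpha-2}f^2$ that dominates the powers of $r$, after which all $r$-dependence cancels and one is left with a differential inequality in \emph{time} for $f$. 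Criticality is then used only to guarantee that the coefficients $g(t)^{2/(1+\beta)}$ and $g(t)^{2/(1+\beta-\alpha)}$ are integrable in time (their exponents are strictly less than $p=2/\beta$), so a Gronwall/ODE-comparison closes the argument. The integrability is in $t$, not a summation over $r$.

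Two further points you should address. First, the actual treatment of the stretching term requires the decomposition $B=(B-\bar B)+\bar B$ inside $\mathcal E=\int(B-\bar B)\cdot[(B\cdot\nabla)u]\,\phi$: the $(B-\bar B)$ part is quadratic in the oscillation and is handled by the pointwise bound $r^{-\beta+1}\int_{\mathcal B_1(0)}|\nabla u(x+ry,t)|\,dy\le k_2g(t)$ (a consequence of $\nabla u\in L^q$ with $q=3/(1-\beta)$ via H\"older on the ball), while the $\bar B$ part is only \emph{linear} in the oscillation and must be controlled using the global $L^2$ bound on $B$ from the energy estimate. This is where $\|B_0\|_{L^2}$ enters the constant $C$, and it is why the resulting ODE for $f$ is inhomogeneous; your proposal mentions the energy estimate but never connects it to the stretching term, which is the actual novelty relative to the pressure system \eqref{1.4}. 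Second, the energy estimate itself at critical $(p,q)$ does not follow from a single Gronwall absorption: the paper subdivides $[0,T]$ into finitely many intervals on which $\|u\|_{L^p L^q}$ is small and iterates, in the style of Lady\v{z}henskaya--Solonnikov--Ural'tseva.
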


%\begin{remark}
%The assumption \eqref{c1} implies both $u$ and $\nabla u$ are $L^p$ integrable functions in time, with $u(\cdot,t)\in C^{0,\beta}$ and $\nabla u(\cdot,t)\in \mathcal{L}^{1,\beta-1}$ respectively for all $t$ (here $\mathcal{L}^{1,\beta-1}$ is the usual Morrey-Campanato space). In particular, if $u\in L^p([0,T],W^{1,q}(\R^3))$ for $q=\frac{3}{1-\beta}$, then assumption \eqref{c1} holds.
%\end{remark}

\begin{remark}
We point out that the drift velocity $u$ in \eqref{1.1} is assumed to be {\it critical} with respect to the natural scaling of the system of equations. In other words, the norm $\|u\|_{L^p([0,T];W^{1,q}(\R^3)}$ remains invariant under the transformation $u(x,t)\mapsto ru(rx,r^2t)$ for any $r>0$. We refer the readers to \cite{SV12} for a more detailed discussion about critical assumptions on the drift velocity $u$.
\end{remark}

\begin{remark}
We also recall the result obtained by Lady$\breve{z}$henskaya et al in \cite{LSU68} which says that under the assumptions $u,\nabla u\in L^p_t L^q_x$ with $\frac{2}{p}+\frac{3}{q}\le1$ and the initial condition $B_0\in L^2(\R^3)$, then the system \eqref{1.1} has a unique weak solution. Our present work addresses the propagation of regularity which is different from the existence theory shown in \cite{LSU68}, yet part of our analysis is reminiscent of those given therein.
\end{remark}

\begin{remark}
On replacing $B$ by curl $u$, equation \eqref{1.1} becomes the three dimensional Navier-Stokes equations in vorticity form. In this context Theorem~1.1 gives a no blow-up criterion for the Navier-Stokes equations. It says that if the norm of $u$ remains bounded in $L^p_t W^{1,q}_x$ with $\nabla\times u_0\in L^2_x$, then $\nabla\times u$ does not blow up on $\R^3\times[0,T]$. This result is closely connected to the scale invariant no blow-up condition for the velocity observed in Silvestre and Vicol \cite{SV12} and the classical Lady$\breve{z}$henskaya-Foia$\text{\c{s}}$-Prodi-Serrin condition. 
\end{remark}

%We remark that, when $B=\nabla\times u$, Theorem~1.1 gives a non-blowup criterion for the vorticity to 3-D Navier-Stokes equation. It says that if $u$ remains bounded in $L^p_t W^{1,q}\cap L^5_t L^5$, then $\nabla\times u$ does not blow up on $\R^3\times[0,T]$.

\section{Preliminaries}

We introduce the following notations. $\mathcal B_r(x)$ is the ball of radius $r$ centered at $x$, and $\phi$ is defined to be a smooth function supported on $\mathcal B_1(0)$ with integral value 1. The weighted mean of $B$ on $\mathcal B_r(x)$ is defined by
\begin{align}\label{d1}
\bar B(x,t,r)=\int_{\mathcal B_1(0)}B(x+ry,t)\phi(y)dy.
\end{align}

The integral version of the modulus of continuity of $B$ is given by
\begin{align}\label{d2}
\mathcal I(x,t,r)=\int_{\mathcal B_1(0)}|B(x+ry,t)-\bar B(x,t,r)|^2\phi(y)dy.
\end{align}

%We define the following function spaces. $V^{1,0}(\R^3\times[0,T])$ is the Banach space obtained by completing the set $W^{1,1}(\R^3\times [0,T))$ in the norm $\|\cdot\|$ given by
%\begin{align*}
%\|f\|=\max_{0\le t\le T}\int |f(x,t)|^2dx+\int_0^T\int_{\R^3}|\nabla u(x,t)|dxdt.
%\end{align*}

%$V^{1,\frac{1}{2}}(\R^3\times [0,T))$ is the subset of those elements $f(x,t)$ of $V^{1,0}(\R^3\times[0,T])$ for which
%\begin{align*}
%\int_0^{T-h}\int_{\R^3}h^{-1}|f(x,t+h)-f(x,t)|^2dxdt\rightarrow0\mbox{ as $h\rightarrow0$.}
%\end{align*}

%$\mathring{V}^{1,\frac{1}{2}}(\R^3\times [0,T))$ is the subspace of $V^{1,\frac{1}{2}}(\R^3\times [0,T))$ in which the set of all functions that are infinitely differentiable and finite in $\R^3$ are dense.

%\medskip

%We state the following result from \cite{LSU68} which guarantees the existence of weak solution to \eqref{1.1}-\eqref{1.2}.

%\begin{theorem}
%(Lady$\breve{z}$henskaya-Solonnikov-Ural'ceva) Let $T>0$ be given and $p,q$ be as defined in Theorem~1.1. Assume that $u$ satisfies \eqref{c1}-\eqref{c2} and the initial condition $B_0\in L^2(\R^3)$. Then the system \eqref{1.1}-\eqref{1.2} has a unique weak solution $B(x,t)\in \mathring{V}^{1,\frac{1}{2}}(\R^3\times [0,T))$.
%\end{theorem}

We have the following proposition which is reminiscent of the one obtained by Slivestre and Vicol cf. [\cite{SV12}, Proposition~2.2]:

\begin{proposition}
Let $B^\varepsilon$ and $u^\varepsilon$ be a sequence of smooth divergence-free vector field. Assume that $B^\varepsilon$ is a weak
solution of \eqref{1.1} with drift velocity $u^\varepsilon$ as shown by Lady$\breve{z}$henskaya et al cf. [\cite{LSU68}, Theorem~1.1 in Chapter VII]. Assume also that $u^\varepsilon,\nabla u^\varepsilon\rightarrow u,\nabla u$ strongly in $L^1
_{loc}L^1_{loc}$. Then, up to a subsequence,
$B^\varepsilon$ converges weakly to a weak solution of \eqref{1.1}.
\end{proposition}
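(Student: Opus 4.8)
The plan is to read this as a compactness-and-stability statement for the weak formulation of \eqref{1.1}, with the stretching term $B\cdot\nabla u$ — which is precisely what distinguishes this system from the pressure-coupled model of \cite{SV12} — as the term demanding care. Fix a smooth, compactly supported, divergence-free space-time test function $\psi$. Integrating by parts and using $\nabla\cdot u^\varepsilon=0$, each $B^\varepsilon$ satisfies
\begin{align*}
&-\int_0^T\!\!\int_{\R^3}B^\varepsilon\cdot\partial_t\psi\,dx\,dt
-\int_0^T\!\!\int_{\R^3}(u^\varepsilon\otimes B^\varepsilon):\nabla\psi\,dx\,dt\\
&\quad+\int_0^T\!\!\int_{\R^3}(B^\varepsilon\cdot\nabla u^\varepsilon)\cdot\psi\,dx\,dt
+\eta\int_0^T\!\!\int_{\R^3}\nabla B^\varepsilon:\nabla\psi\,dx\,dt
=\int_{\R^3}B_0\cdot\psi(\cdot,0)\,dx.
\end{align*}
First I would record uniform-in-$\varepsilon$ a priori bounds. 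The solutions furnished by Lady$\breve{z}$henskaya et al.\ come with the energy identity obtained by testing against $B^\varepsilon$ itself; here the advection term drops by $\nabla\cdot u^\varepsilon=0$, and the stretching term $\int(B^\varepsilon\cdot\nabla u^\varepsilon)\cdot B^\varepsilon$ is controlled by Gagliardo--Nirenberg together with the uniform critical bound on $u^\varepsilon$ in $L^p_tW^{1,q}_x$ (so $\nabla u^\varepsilon\in L^p_tL^q_x$ with $q>3$), the dissipation $\eta\|\nabla B^\varepsilon\|_{L^2}^2$ absorbing the resulting power of $\|\nabla B^\varepsilon\|_{L^2}$; Gr\"onwall then closes the estimate. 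This gives bounds, uniform in $\varepsilon$, for $B^\varepsilon$ in $L^\infty([0,T];L^2)\cap L^2([0,T];H^1)$, and hence, by interpolation, in $L^{10/3}_{\loc}$ of space-time.

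Next I would read $\partial_t B^\varepsilon=-u^\varepsilon\cdot\nabla B^\varepsilon+B^\varepsilon\cdot\nabla u^\varepsilon+\eta\Delta B^\varepsilon$ off the equation and estimate each term in a negative-order space, obtaining a uniform bound on $\partial_t B^\varepsilon$ in $L^r([0,T];H^{-s}_{\loc})$ for suitable $r>1$ and $s$. From the Step-one bounds, after passing to a subsequence, $B^\varepsilon\rightharpoonup B$ weakly-$*$ in $L^\infty_tL^2$ and weakly in $L^2_tH^1$; combining the $H^1_x$ bound with the time-derivative bound and the Aubin--Lions--Simon lemma upgrades this to strong convergence $B^\varepsilon\to B$ in $L^2_{\loc}$ of space-time, which interpolates (against the uniform $L^{10/3}_{\loc}$ bound) to strong convergence in $L^s_{\loc}$ for every $s<10/3$. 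Weak convergence preserves $\nabla\cdot B^\varepsilon=0$, so $\nabla\cdot B=0$.

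It then remains to pass to the limit term by term. The two linear terms (those with $\partial_t\psi$ and with $\nabla B^\varepsilon:\nabla\psi$) pass immediately by weak convergence. For the advection and, above all, the stretching term I would write $B^\varepsilon\nabla u^\varepsilon-B\nabla u=(B^\varepsilon-B)\nabla u^\varepsilon+B(\nabla u^\varepsilon-\nabla u)$ and estimate each piece on a compact set $K$ by H\"older's inequality in the dual pair $(L^s,L^{s'})$ with $s$ slightly below $10/3$. The first piece vanishes because $B^\varepsilon-B\to0$ strongly in $L^s(K)$ while $\nabla u^\varepsilon$ is uniformly bounded in $L^{s'}(K)\supset L^2(K)$; for the second, I would upgrade the assumed $L^1_{\loc}$ convergence $\nabla u^\varepsilon\to\nabla u$ to strong convergence in $L^{s'}(K)$ by interpolating it against the uniform bound $\nabla u^\varepsilon\in L^p_tL^q_x$ (here $1<s'<2\le\min(p,q)$, so boundedness in $L^2_{\loc}$ together with $L^1_{\loc}$ convergence forces $L^{s'}_{\loc}$ convergence), and then pair against the fixed $B\in L^s(K)$. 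The advection term is handled identically with $\nabla u^\varepsilon$ replaced by $u^\varepsilon$.

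The main obstacle is exactly this stretching term: since only $L^1_{\loc}$ convergence of $\nabla u^\varepsilon$ is assumed, pairing it against the merely weakly convergent $B^\varepsilon$ is insufficient, and the crux is to turn \emph{both} factors into strongly convergent sequences in dual Lebesgue exponents — $B^\varepsilon$ via Aubin--Lions plus Lady$\breve{z}$henskaya-type interpolation into $L^{10/3}_{\loc}$, and $\nabla u^\varepsilon$ via interpolation of its $L^1_{\loc}$ convergence with its uniform critical bound. This is where the criticality $q>3$ is genuinely used, and where the argument departs from the pressure-gradient coupling of \cite{SV12}. Once every term converges, letting $\varepsilon\to0$ and recalling that $\psi$ was arbitrary shows that the limit $B$ is a divergence-free weak solution of \eqref{1.1} attaining the datum $B_0$, completing the proof.
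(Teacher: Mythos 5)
Your argument is correct and is precisely the standard compactness proof that the paper itself omits, deferring instead to [\cite{SV12}, Proposition~2.2]: uniform energy bounds (the content of the paper's Lemma~3.2), an Aubin--Lions step yielding strong $L^2_{\loc}$ convergence of $B^\varepsilon$, and a strong-times-weak limit passage in which the stretching term is split exactly as you do. The only caveat is that you invoke a uniform $L^p_tW^{1,q}_x$ bound on the $u^\varepsilon$, which is not literally among the proposition's hypotheses (only $L^1_{\loc}$ convergence is assumed) but is implicit in the intended application, where $u^\varepsilon$ is a mollification of a $u$ satisfying \eqref{c1}.
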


We recall the following proposition which is useful in proving Theorem~1.1. The proof can be found in \cite{C63}.

\begin{proposition}
(Campanato's characterization of H\"{o}lder spaces) Let $f:\R^n\rightarrow\R^m$ be an $L^2$ function such that for all $r>0$ and $x\in\R^n$, there exists a constant $\bar f$ such that
\begin{align*}
\int_{\mathcal B_1(0)}|f(x+ry)-\bar f|\phi(y)dy\le A^2r^{2\alpha}
\end{align*}
for some positive constant $A$, and $\alpha\in(0,1)$. Then the function $f$ has a H\"{o}lder continuous representative such that 
\begin{align*}
|f(x)-f(y)|\le \mathcal KA|x-y|^\alpha
\end{align*}
where the constant $\mathcal K>0$ depends on dimensions and $\alpha$ only.
\end{proposition}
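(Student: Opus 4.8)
The plan is to recover the H\"older representative as the pointwise limit of the weighted means and to quantify its modulus of continuity by a dyadic (telescoping) argument, which is the classical Campanato scheme adapted to the smooth weight $\phi$ in place of a normalized indicator. Writing $\bar f(x,r)=\int_{\mathcal B_1(0)}f(x+ry)\phi(y)\,dy$ for the weighted mean at scale $r$ centered at $x$ (cf. \eqref{d1}), I would first set $f^\ast(x):=\lim_{r\to0}\bar f(x,r)$ and show this limit exists; the conclusion is then an estimate on $|f^\ast(x)-f^\ast(y)|$, with $f^\ast=f$ almost everywhere by the Lebesgue differentiation theorem.

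The first step is to control the increment between two consecutive dyadic scales, $|\bar f(x,r)-\bar f(x,r/2)|$. Both $\bar f(x,r)$ and $\bar f(x,r/2)$ are weighted averages of $f$ over balls contained in $\mathcal B_r(x)$, so after the change of variables $z=x+ry$ their difference can be written as an integral of $f(z)-\bar f(x,r)$ against a signed, compactly supported kernel. Since $\phi$ is bounded below on a slightly smaller ball, this integral is dominated by the scale-$r$ oscillation appearing in the hypothesis, giving $|\bar f(x,r)-\bar f(x,r/2)|\le C\,A\,r^{\alpha}$. Summing the resulting geometric series over $r,r/2,r/4,\dots$ shows that $\bar f(x,r)$ is Cauchy as $r\to0$, that the limit $f^\ast(x)$ exists, and that
\[
|\bar f(x,r)-f^\ast(x)|\le C\,A\,r^{\alpha}.
\]

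The second step is the cross estimate at a fixed scale. Given $x\ne y$, I would choose $r=2|x-y|$ (or any comparable multiple) so that $\mathcal B_r(x)$ and $\mathcal B_r(y)$ overlap in a region of measure comparable to $r^n$ on which both weights are bounded below. Comparing $\bar f(x,r)$ and $\bar f(y,r)$ to the values of $f$ on this overlap, and invoking the hypothesis at the two centers, yields $|\bar f(x,r)-\bar f(y,r)|\le C\,A\,r^{\alpha}=C\,A\,|x-y|^{\alpha}$. Combining this with the two convergence estimates of the previous step through the triangle inequality gives $|f^\ast(x)-f^\ast(y)|\le \mathcal K\,A\,|x-y|^{\alpha}$, with $\mathcal K$ depending only on $n$ and $\alpha$, as claimed.

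The geometric summation and the identification $f^\ast=f$ are routine; I expect the only delicate bookkeeping to be the two comparison estimates involving the fixed bump $\phi$. Because $\phi$ is not the normalized characteristic function of a ball, averages over nested or translated balls cannot simply be split additively; instead one uses that $\phi$ is bounded above and below on a slightly smaller ball and absorbs the resulting constants into $C$ and ultimately into $\mathcal K$. This is where I would spend the most care, but the argument otherwise uses nothing beyond the stated hypothesis.
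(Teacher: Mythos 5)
The paper offers no proof of this proposition---it simply cites Campanato \cite{C63}---and your dyadic telescoping of the weighted means $\bar f(x,r)\to f^\ast(x)$ plus the cross-estimate at scale $r\sim|x-y|$ is exactly the classical argument contained in that reference, so your proposal is correct and takes essentially the same route. Two small points you handle implicitly but correctly: the hypothesis as printed has a homogeneity mismatch (first power of $|f(x+ry)-\bar f|$ against $A^2r^{2\alpha}$; the intended hypothesis, consistent with the definition of $\mathcal I$ in \eqref{d2} and with how the proposition is applied, is the squared version, from which your $CAr^{\alpha}$ oscillation bound follows by Cauchy--Schwarz since $\int\phi=1$), and the kernel comparisons at different scales and centers do require $\phi$ to be bounded below on a fixed smaller ball, which is the delicate bookkeeping you rightly flag and which holds for the standard nonnegative bump the paper intends.
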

We also recall the following Sobolev inequality which is standard cf. [Ziemer \cite{Z89}, Theorem~2.4.1]:

\begin{proposition}
(Sobolev inequality in $\R^3$) For any $d\in[2,6]$, there exists $M(d)>0$ such that for each $\psi\in W^{1,2}(\R^3)$,
\begin{align}\label{si1}
\int_{\R^3}|\psi|^d\le M(d)\left(\int_{\R^3}|\psi|^2\right)^\frac{6-d}{4}\left(\int_{\R^3}|\nabla\psi|^2\right)^\frac{3d-6}{4}.
\end{align}
%In particular, there exists $M=M(\frac{10}{3})$ such that
%\begin{align}\label{p1}
%\int_{\R^3}|\psi|^\frac{10}{3}\le M\left(\int_{\R^3}|\psi|^2\right)^\frac{2}{3}\left(\int_{\R^3}|\nabla\psi|^2\right).
%\end{align}
\end{proposition}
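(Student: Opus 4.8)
This is a Gagliardo–Nirenberg interpolation inequality, and since it is quoted from Ziemer one could simply cite it; but here is how I would establish it from scratch. The plan is to split the argument into two pieces: the genuine endpoint Sobolev embedding $W^{1,2}(\R^3)\hookrightarrow L^6(\R^3)$ at the critical exponent $d=6$, and an elementary interpolation between $L^2$ and $L^6$ that handles all intermediate $d\in[2,6]$. The cases $d=2$ (trivial, with $M(2)=1$) and $d=6$ (the pure embedding) then sit at the two endpoints of the interpolation.

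First I would establish the critical embedding $\|\psi\|_{L^6(\R^3)}\le C_S\|\nabla\psi\|_{L^2(\R^3)}$. By density it suffices to treat $\psi\in C_c^\infty(\R^3)$. I would start from the $L^1$ Gagliardo–Nirenberg–Sobolev inequality $\|f\|_{L^{3/2}(\R^3)}\le C\|\nabla f\|_{L^1(\R^3)}$, which follows from writing $f(x)$ as the integral of $\partial_{x_i}f$ along each coordinate line, bounding $|f|$ by the product of the three resulting one-dimensional integrals, and applying the generalized Hölder inequality (the Loomis–Whitney argument). I would then bootstrap to the $L^2$-gradient case by applying this to $f=|\psi|^4$: the left side becomes $\||\psi|^4\|_{L^{3/2}}=\|\psi\|_{L^6}^4$, while on the right $\nabla|\psi|^4=4|\psi|^2\psi\,\nabla\psi$ and Cauchy–Schwarz give $\|\nabla|\psi|^4\|_{L^1}\le 4\|\,|\psi|^3\|_{L^2}\|\nabla\psi\|_{L^2}=4\|\psi\|_{L^6}^3\|\nabla\psi\|_{L^2}$. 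Dividing through by $\|\psi\|_{L^6}^3$ yields the endpoint with $C_S=4C$.

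Second, I would interpolate. By the log-convexity of $L^p$ norms (an immediate consequence of Hölder's inequality), writing $\frac{1}{d}=\frac{\theta}{2}+\frac{1-\theta}{6}$ and solving gives $\theta=\frac{6-d}{2d}$ and $1-\theta=\frac{3d-6}{2d}$, so that $\|\psi\|_{L^d}\le\|\psi\|_{L^2}^{(6-d)/(2d)}\|\psi\|_{L^6}^{(3d-6)/(2d)}$. Raising to the $d$-th power turns the exponents into $\frac{6-d}{2}$ and $\frac{3d-6}{2}$, and substituting the endpoint bound for the $L^6$ factor and converting norms to the squared integrals produces exactly the claimed inequality with $M(d)=C_S^{(3d-6)/2}$, the exponents $\frac{6-d}{4}$ and $\frac{3d-6}{4}$ arising from the factor of $\frac12$ in $\|\cdot\|_{L^2}$. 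The constraint $d\in[2,6]$ is precisely what forces $\theta\in[0,1]$, so the interpolation is valid throughout the stated range.

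The only genuinely nontrivial step is the endpoint embedding at $d=6$; everything else is Hölder bookkeeping. I therefore expect the main obstacle to be the Loomis–Whitney estimate underlying the $L^1$ inequality, though since this is entirely standard (and exactly what is being cited from Ziemer) it poses no real difficulty here.
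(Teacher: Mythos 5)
Your proof is correct. The paper itself gives no argument for this proposition---it is quoted with a citation to Ziemer, Theorem~2.4.1---so there is no internal proof to compare against. Your route (the Loomis--Whitney argument for the $L^1$ endpoint $\|f\|_{L^{3/2}}\le C\|\nabla f\|_{L^1}$, the substitution $f=|\psi|^4$ to reach $\|\psi\|_{L^6}\le C_S\|\nabla\psi\|_{L^2}$, then log-convexity of $L^p$ norms with $\theta=\frac{6-d}{2d}$) is the standard textbook derivation; the exponent bookkeeping checks out and yields $M(d)=C_S^{(3d-6)/2}$, the only points of care being that you divide by $\|\psi\|_{L^6}^3$ only when it is nonzero and that you close the argument for general $\psi\in W^{1,2}(\R^3)$ by density, both of which are routine.
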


\section{Proof of the main result}

We begin our proof of Theorem~1.1 with the following auxiliary lemma about the drift velocity $u$. 

\begin{lemma}
Let $\beta\in(0,1]$, $p,q$ and $T>0$ be in Theorem~1.1. Assume that $u$ satisfies \eqref{c1} with $\|u(\cdot,t)\|_{W^{1,q}(\R^3)}\le g(t)$ for some $g(t)\in L^p([0,T])$. Then for all $t\in(0,T]$, we have
\begin{align}
[u(\cdot,t)]_{C^{\beta}(\R^3)}&\le k_1 g(t),\label{1.2-1}\\
\sup_{x\in\R^3}\sup_{r>0} r^{-\beta}\int_{\mathcal B_1(0)}|u(x+ry,t)-u(x,t)|dy&\le k_1 g(t),\label{1.2-2}\\
\sup_{x\in\R^3}\sup_{r>0} r^{-\beta+1}\int_{\mathcal B_1(0)}|\nabla u(x+ry,t)|dy&\le k_2 g(t).\label{1.2-3}
\end{align}
Here $k_1,k_2$ are some positive constants.
\end{lemma}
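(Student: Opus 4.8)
The plan is to obtain all three estimates from a single structural feature of the critical exponents: with $q=\frac{3}{1-\beta}$ one has the two scaling identities $1-\tfrac{3}{q}=\beta$ and $1-\beta-\tfrac{3}{q}=0$. The first drives a Morrey embedding, and the second produces the exact cancellation of powers of $r$ that makes \eqref{1.2-3} scale invariant. Everything else is a routine application of H\"{o}lder's inequality and a change of variables, so I do not expect a genuine analytic obstacle; the only real content is verifying these two identities and tracking the powers of $r$.

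First, for \eqref{1.2-1} I would invoke Morrey's inequality in dimension three. Since $q=\frac{3}{1-\beta}>3$ for $\beta\in(0,1]$ (with the endpoint $q=\infty$ when $\beta=1$, where $W^{1,\infty}$ embeds into $C^{0,1}$), the space $W^{1,q}(\R^3)$ embeds continuously into $C^{0,\gamma}(\R^3)$ with $\gamma=1-\tfrac{3}{q}$, and the $C^{0,\gamma}$ seminorm is controlled by $\|\nabla u(\cdot,t)\|_{L^q(\R^3)}$. The computation $1-\tfrac{3}{q}=1-(1-\beta)=\beta$ identifies $\gamma=\beta$, so $[u(\cdot,t)]_{C^{\beta}(\R^3)}\le C\|\nabla u(\cdot,t)\|_{L^q(\R^3)}\le C\|u(\cdot,t)\|_{W^{1,q}(\R^3)}\le C g(t)$, which is \eqref{1.2-1}. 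Estimate \eqref{1.2-2} then follows by a one-line integration: from the H\"{o}lder bound $|u(x+ry,t)-u(x,t)|\le [u(\cdot,t)]_{C^{\beta}}\,|ry|^{\beta}$ and $|y|\le1$ on $\mathcal B_1(0)$, I get $r^{-\beta}\int_{\mathcal B_1(0)}|u(x+ry,t)-u(x,t)|\,dy\le [u(\cdot,t)]_{C^{\beta}}\int_{\mathcal B_1(0)}|y|^{\beta}\,dy\le C g(t)$, after enlarging $k_1$ by the constant $|\mathcal B_1(0)|$.

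The substantive step is \eqref{1.2-3}, where the criticality is actually used. Applying H\"{o}lder's inequality on the unit ball with exponents $q$ and $q'=\frac{q}{q-1}$ gives $\int_{\mathcal B_1(0)}|\nabla u(x+ry,t)|\,dy\le |\mathcal B_1(0)|^{1-1/q}\bigl(\int_{\mathcal B_1(0)}|\nabla u(x+ry,t)|^{q}\,dy\bigr)^{1/q}$. Changing variables $z=x+ry$, so that $dy=r^{-3}\,dz$ and the domain becomes $\mathcal B_r(x)\subset\R^3$, bounds the right-hand side by $C\,r^{-3/q}\|\nabla u(\cdot,t)\|_{L^q(\R^3)}$. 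Multiplying by $r^{-\beta+1}$ leaves the factor $r^{\,1-\beta-3/q}$, and here the choice $q=\frac{3}{1-\beta}$ forces $1-\beta-\tfrac{3}{q}=0$, so the $r$-dependence cancels exactly. Taking the supremum over $x\in\R^3$ and $r>0$ then yields $\sup_{x,r}\,r^{-\beta+1}\int_{\mathcal B_1(0)}|\nabla u(x+ry,t)|\,dy\le C\|\nabla u(\cdot,t)\|_{L^q(\R^3)}\le C g(t)$, which is \eqref{1.2-3} with $k_2=C$. The main thing to watch is simply that the critical pair $(p,q)$ makes both $r$-exponents vanish; once that is checked, each bound is immediate.
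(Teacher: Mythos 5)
Your proposal is correct and follows essentially the same route as the paper: Morrey's embedding with $1-3/q=\beta$ for \eqref{1.2-1}, and H\"older's inequality plus the change of variables $z=x+ry$ with the exact cancellation $1-\beta-3/q=0$ for \eqref{1.2-3}. The only cosmetic difference is that for \eqref{1.2-2} you integrate the pointwise H\"older bound directly, whereas the paper cites the Morrey--Campanato/H\"older equivalence; these are the same estimate.
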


\begin{proof}
We first consider the case for $\beta\in(0,1)$. To prove \eqref{1.2-1}, we notice that by the definition of $q$ we have $\beta=1-\frac{3}{q}$. So by Morrey's inequality, there exists $k_1>0$ such that
\begin{align*}
[u(\cdot,t)]_{C^{\beta}(\R^3)}\le k_1\|u(\cdot,t)\|_{W^{1,q}(\R^3)}\le k_1 g(t).
\end{align*}
Next we prove \eqref{1.2-2} and \eqref{1.2-3}. The assertion \eqref{1.2-2} follows from \eqref{1.2-1} and the equivalence between Morrey-Campanato space and H\"{o}lder class cf. \cite{SV12}. To show \eqref{1.2-3}, we fix $x\in\R^3$, $r>0$ and $t\in(0,T]$. Using H\"{o}lder's inequality, there exists $k_2>0$ such that
\begin{align*}
r^{-\beta+1}\int_{\mathcal B_1(0)}|\nabla u(x+ry,t)|dy&\le \frac{k_2r^{-\beta+1}}{r^3}\int_{\mathcal B_r(x)}|\nabla u(z)|dz\\
&\le k_2 r^{-{\beta-2}}\left[\int_{\mathcal B_r(x)}|\nabla u(z)|^q dz\right]^\frac{1}{q}\left[\int_{\mathcal B_r(x)}1 dz\right]^{1-\frac{1}{q}}\\
&\le k_2 r^{-{\beta-2}}\|u(\cdot,t)\|_{W^{1,q}(\R^3)}(r^3)^{1-\frac{1}{q}}\\
&\le k_2 r^{-{\beta-2}}g(t)r^{\beta+2}=k_2 g(t).
\end{align*}
Hence by taking supremum over $x$ and $r$, \eqref{1.2-3} follows immediately.

For the case when $\beta=1$, first we note that $u(\cdot,t)\in W^{1,\infty}(\R^3)$ if and only if $u$ is Lipschitz continuous with $\|u(\cdot,t)\|_{C^{1}(\R^3)}\le k_3\| u(\cdot,t)\|_{W^{1,\infty}(\R^3)}$ for some constant $k_3$, and hence \eqref{1.2-1} follows. Then \eqref{1.2-2} and \eqref{1.2-3} are direct consequences of \eqref{1.2-1} and we omit the details here. 
\end{proof}

Next we prove the following $L^2$ estimates on $B$ in terms of $u$.

\begin{lemma}
Assume that the notations as in Theorem~1.1 are in force, and let $u:\R^3\times[0,T]\rightarrow\R^3$ be a divergence-free vector field such that
\begin{align*}
u\in L^p([0,T];L^{q}(\R^3)).
\end{align*}
Then there exists $c>0$ such that,
\begin{align}\label{1.3-0}
\sup_{0\le t\le T}\int_{R^3}|B(x,t)|^2dx+\int_0^T\!\!\!\int_{R^3}|\nabla B|^2\le c\|B_0\|^2_{L^2(\R^3)},
\end{align}
where $c$ depends on $T$ and $\|u\|_{L^p([0,T],L^q(\R^3))}$.
\end{lemma}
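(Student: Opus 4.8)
The plan is to derive the energy estimate \eqref{1.3-0} via the standard $L^2$ method: multiply the induction equation by $B$, integrate over $\R^3$, and control the resulting terms. Formally, I would take the inner product of the first equation in \eqref{1.1} with $B$ and integrate. The Laplacian term yields $-\eta\int|\nabla B|^2$ after integration by parts, which will supply the dissipation on the left-hand side. The transport term $\int (u\cdot\nabla B)\cdot B$ vanishes because $u$ is divergence-free: indeed $\int (u\cdot\nabla B)\cdot B = \tfrac12\int u\cdot\nabla|B|^2 = -\tfrac12\int (\nabla\cdot u)|B|^2 = 0$. The only genuinely problematic contribution is the stretching term $\int (B\cdot\nabla u)\cdot B$, which must be absorbed into the dissipation and a Gronwall-type remainder.

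The heart of the argument is estimating $\int (B\cdot\nabla u)\cdot B\,dx$, which is bounded by $\int |\nabla u|\,|B|^2\,dx$. First I would apply H\"older's inequality with exponents $q$, and two further exponents on $|B|^2$ to match $\nabla u\in L^q$; since $q=\tfrac{3}{1-\beta}$, the conjugate pairing leads naturally to a factor of $\|B\|_{L^d}^2$ for an appropriate $d\in[2,6]$. Specifically, writing $\int|\nabla u||B|^2 \le \|\nabla u\|_{L^q}\,\|B\|_{L^{2q'}}^2$ with $q'=\tfrac{q}{q-1}$, one checks that $2q'$ falls in the admissible range $[2,6]$ for $\beta\in(0,1]$. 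I would then invoke the Sobolev inequality \eqref{si1} from Proposition~2.4 to interpolate $\|B\|_{L^{2q'}}^2$ between $\|B\|_{L^2}^2$ and $\|\nabla B\|_{L^2}^2$. The exponent of $\|\nabla B\|_{L^2}^2$ produced this way should be strictly less than $1$ precisely because of the critical scaling relation $\tfrac{2}{p}+\tfrac{3}{q}=1$ encoded in the definitions $p=\tfrac{2}{\beta}$, $q=\tfrac{3}{1-\beta}$, which is exactly what makes the Young inequality step work.

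After applying Young's inequality to the interpolated bound, I would split the stretching term into a piece of the form $\tfrac{\eta}{2}\|\nabla B\|_{L^2}^2$ (absorbed into the left-hand dissipation) plus a remainder of the shape $C\,g(t)^{p}\,\|B\|_{L^2}^2$, where $g(t)=\|u(\cdot,t)\|_{W^{1,q}}\in L^p([0,T])$ and the power $p$ arises as the conjugate exponent in the Young step. Denoting $y(t)=\|B(\cdot,t)\|_{L^2}^2$, the resulting differential inequality reads
\begin{align*}
\frac{d}{dt}y(t)+\eta\int_{\R^3}|\nabla B|^2\,dx \le C\,g(t)^{p}\,y(t).
\end{align*}
Since $g\in L^p([0,T])$, the multiplier $g(t)^p$ is integrable on $[0,T]$, so Gronwall's inequality gives $y(t)\le y(0)\exp\!\big(C\!\int_0^T g^{p}\big)$, yielding the pointwise-in-time $L^2$ bound with constant depending on $T$ and $\|u\|_{L^p_tL^q_x}$. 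Integrating the differential inequality in time then absorbs the cumulative $\int_0^T\|\nabla B\|_{L^2}^2$ on the left, completing \eqref{1.3-0}.

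The main obstacle is making the interpolation exponents come out correctly: I expect the delicate point to be verifying that the power of $\|\nabla B\|_{L^2}^2$ emerging from the Sobolev interpolation is strictly below $1$, so that Young's inequality can transfer that factor into the dissipation while leaving a remainder that is merely $L^1$ in time. This is where the criticality of the assumption on $u$ is indispensable; at the borderline one must check the arithmetic carefully (and, strictly, these formal manipulations should be justified on the smooth approximations $B^\varepsilon,u^\varepsilon$ of Proposition~2.2 and passed to the limit), but no conceptual difficulty beyond the scaling bookkeeping should arise.
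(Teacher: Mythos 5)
There is a genuine gap, and it sits exactly at the point the paper flags in Remark~3.3: the lemma assumes only $u\in L^p([0,T];L^{q}(\R^3))$, with \emph{no} integrability assumption on $\nabla u$, whereas your estimate of the stretching term begins with $\int|\nabla u|\,|B|^2\le\|\nabla u\|_{L^q}\|B\|_{L^{2q'}}^2$ and your Gronwall multiplier is $g(t)=\|u(\cdot,t)\|_{W^{1,q}}$. You are therefore proving the lemma under the stronger hypothesis \eqref{c1}, not under the stated one. The missing idea is to move the derivative off $u$ before estimating: since $\nabla\cdot B=0$, integration by parts gives
\begin{align*}
\int_{\R^3}[(B\cdot\nabla)u]\cdot B\,dx=-\int_{\R^3}[(B\cdot\nabla)B]\cdot u\,dx,
\end{align*}
so the term is controlled by $\int|u|\,|B|\,|\nabla B|$, which by H\"older with exponents $(q,d,2)$, $\frac1q+\frac1d+\frac12=1$, $d=\frac{6}{1+2\beta}\in[2,6]$, and the Sobolev inequality \eqref{si1} is bounded using only $\|u\|_{L^q}$. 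This is precisely what the paper does, and it is the reason the lemma can be stated (and used in Remark~3.3) with the weaker drift assumption. Since the lemma is only invoked inside Theorem~1.1, where \eqref{c1} does hold, your version would still suffice for the application, but it does not prove the statement as written.

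Two smaller points. First, your exponent bookkeeping in the Young step is off: with $d=2q'=\frac{6}{2+\beta}$ the interpolation gives $\|B\|_{L^{2q'}}^2\lesssim\|B\|_{L^2}^{1+\beta}\|\nabla B\|_{L^2}^{1-\beta}$, so after absorbing $\epsilon\|\nabla B\|_{L^2}^2$ the remainder carries $g(t)^{2/(1+\beta)}$, not $g(t)^{p}$; this is harmless since $\frac{2}{1+\beta}<p$ makes it integrable on $[0,T]$, but you should check it. Second, the paper does not use Young plus Gronwall at all: after bounding the cubic term by $M\,\Phi(s,t)\,\|u\|_{L^p([s,t],L^q)}$ it subdivides $[0,T]$ into finitely many intervals on which $\|u\|_{L^p L^q}$ is small enough to absorb the term directly, and then iterates (the argument of Lady\v{z}henskaya--Solonnikov--Ural'tseva). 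Your Gronwall route is a legitimate alternative once the integration by parts is inserted, and is arguably more standard; the subdivision argument buys the same conclusion without needing a pointwise-in-time differential inequality.
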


\begin{remark}
We point out that the assumption on $u$ in Lemma~3.2 is weaker than \eqref{c1} in Theorem~1.1.
\end{remark}

\begin{proof}
For any $0\le s\le t\le T$, we define 
$$\dis\Phi(s,t)=\sup_{s\le\tau\le t}\int_{R^3}|B(x,\tau)|^2dx+\int_s^t\!\!\!\int_{R^3}|\nabla B|^2.$$ 
We multiply the first equation of \eqref{1.1} by $B$ and integrate to obtain, for any $\tau\in[s,t]$,
\begin{align}\label{1.3-1}
\frac{\eta}{2}\int_s^{\tau}\!\!\!\int_{\R^3}|\nabla B|^2+\frac{1}{2}\int_{\R^3}|B(x,\tau)|^2-\frac{1}{2}\int_{\R^3}|B(x,s)|^2=\int_s^{\tau}\!\!\!\int_{\R^3}[(B\cdot\nabla)u]\cdot B.
\end{align}
Recall the definitions for the pair $(p,q)$ which are given by $p=\frac{2}{\beta}$ and $q=\frac{3}{1-\beta}$ with $\beta\in(0,1]$. Using integration by parts, H\"{o}lder's inequality and the Sobolev inequality \eqref{si1}, there exists $M=M(d)>0$ with $\frac{1}{q}+\frac{1}{d}+\frac{1}{2}=1$ and $d=\frac{6}{1+2\beta}\in[2,6]$ such that the right hand side of \eqref{1.3-1} is bounded by
\begin{align*}
&\left|\int_s^{\tau}\!\!\!\int_{\R^3}[(B\cdot\nabla)u]\cdot B\right|\\
&\le\int_s^t\!\!\!\int_{\R^3}|B||\nabla B||u|\\
&\le\int_s^t\left(\int_{\R^3}|u|^q\right)^\frac{1}{q}\left(\int_{\R^3}|B|^d\right)^\frac{1}{d}\left(\int_{\R^3}|\nabla B|^2\right)^\frac{1}{2}\\
&\le \int_s^t\left(\int_{\R^3}|u|^q\right)^\frac{1}{q}\left[M\left(\int_{\R^3}|B|^2\right)^\frac{6-d}{4d}\left(\int_{\R^3}|\nabla B|^2\right)^\frac{3d-6}{4d}\right]\left(\int_{\R^3}|\nabla B|^2\right)^\frac{1}{2}\\
&\le M\|u\|_{L^p([s,t],L^q(\R^3))}\left(\sup_{s\le\tau\le t}\int_{\R^3}|B(x,\tau)|^2dx\right)^\frac{6-d}{4d}\left[\int_s^{\tau}\left(\int_{\R^3}|\nabla B|^2\right)^{\frac{5d-6}{4d}\cdot\frac{p}{p-1}}\right]^{\frac{p-1}{p}}\\
&= M\|u\|_{L^p([s,t],L^q(\R^3))}\left(\sup_{s\le\tau\le t}\int_{\R^3}|B(x,\tau)|^2dx\right)^\frac{6-d}{4d}\left(\int_s^{\tau}\!\!\!\int_{\R^3}|\nabla B|^2\right)^{\frac{p-1}{p}}\\
&\le M\Phi(s,t)\|u\|_{L^p([s,t],L^q(\R^3))},
\end{align*}
which follows from the facts that $\frac{5d-6}{4d}\cdot\frac{p}{p-1}=\frac{4-2\beta}{4}\cdot\frac{2}{2-\beta}=1$ and $\frac{6-d}{4d}+\frac{p-1}{p}=\frac{\beta}{2}+\frac{2-\beta}{2}=1$. Define $\delta=\frac{1}{M}\min\{\frac{1}{2},\frac{\eta}{2}\}$. By taking supremum over $\tau\in[s,t]$, we obtain
\begin{align}\label{1.3-2}
M\delta\Phi(s,t)\le M\Phi(s,t)\|u\|_{L^p([s,t],L^q(\R^3))}+\frac{1}{2}\|B(\cdot,s)\|^2_{L^2(\R^3)}.
\end{align}
Following the same idea in \cite{LSU68}, we subdivide $[0,T]$ into a finite number of intervals $[0,t_1], [t_1,t_2], \dots,$ $[t_{m-1},t_{m}=T]$ such that  for each $k\in\{1,2,\dots,m\}$,
\begin{align*}
\frac{\delta}{4}\le\|u\|_{L^p([t_{k-1},t_k],L^q(\R^3))}\le \frac{\delta}{2}
\end{align*}
with $m\le 1+\frac{4}{\delta}\|u\|_{L^p([0,T],L^q(\R^3))}$. Hence for each $k$, we have from \eqref{1.3-2} that
\begin{align}\label{1.3-3}
\frac{M\delta}{2}\Phi(t_{k-1},t_k)\le\frac{1}{2}\|B(\cdot,t_{k-1})\|^2_{L^2(\R^3)}
\end{align}
We iterate \eqref{1.3-3} and conclude
\begin{align*}
\Phi(0,T)\le(\frac{1}{M\delta})^m\|B_0\|_{L^2(\R^3)}^2\le (\frac{1}{M\delta})^{1+\frac{4}{\delta}\|u\|_{L^p([0,T],L^q(\R^3))}}\|B_0\|_{L^2(\R^3)}^2.
\end{align*}
\end{proof}

\begin{proof}[Proof of Theorem~1.1]
Using Proposition~2.1, it is enough to prove Theorem~1.1 by assuming that $u$ is smooth and $B$ is a classical solution cf. [\cite{SV12}, Proposition~2.3].

Recall the modulus of continuity of $B$ given by $\mathcal I$ in \eqref{d2} that
\begin{align*}
\mathcal I(x,t,r)=\int_{\mathcal B_1(0)}|B(x+ry,t)-\bar B(x,t,r)|^2\phi(y)dy.
\end{align*}
To prove Theorem~1.1, it suffices to show that under the assumptions on $u$ as in Theorem~1.1, we have
\begin{align}\label{p0}
\mathcal I(x,t,r)< f(t)^2 r^{2\alpha}
\end{align}
for some $f(t)>0$ and for all $x, r$ and $t$. For the sake of contradiction, suppose there exists a first time $t>0$ and some values of $x,r$ such that
\begin{align*}
\mathcal I(x,t,r)=f(t)^2r^{2\alpha}.
\end{align*}
%We provide the details for $\beta\in(0,1)$, the case $\beta=1$ follows by the same argument. 
Assume that $\|u(\cdot,t)\|_{W^{1,q}(\R^3)}\le g(t)$ for some $g(t)\in L^p([0,T])$, and without loss of generality we may take $g(t)>0$. Because $\mathcal I<f(t)^2r^{2\alpha}$ for all times prior to $t$ and the assumption that $(B-\bar B)$ has zero mean, we obtain
\begin{align}\label{p1}
2f'(t)f(t)r^{2\alpha}&\le\partial_t\mathcal I\notag\\
&=\int_{\mathcal B_1(0)}(B(x+ry,t)-\bar B(x,t,r))\partial_t B(x+ry,t)\phi(y)dy\notag\\
&=\int_{\mathcal B_1(0)}(B(x+ry,t)-\bar B(x,t,r))\cdot[-u(x+ry,t)\cdot\nabla_x B(x+ry,t)\notag\\
&\qquad+\eta\Delta_x B(x+ry,t)+B(x+ry,t)\cdot\nabla_x u(x+ry,t)]\phi(y)dy\notag\\
&:=\mathcal A+\mathcal D+\mathcal E.
\end{align}
We first consider the terms $\mathcal A$ and $\mathcal D$. We claim that there exists $C_1,C_2>0$ such that
\begin{align}\label{p2}
\mathcal A&\le C_1 r^{2\alpha+\beta-1}f(t)^2 g(t),\\
\mathcal D&\le-C_2 r^{2\alpha-2}f(t)^2.\label{p3}
\end{align}
We only give a sketch of the proof of \eqref{p2}-\eqref{p3} since the details can be found \cite{SV12}. To show \eqref{p2}, we observe that since $\mathcal I$ achieves its maximum at $x$ (for fixed $t$ and $r$), and we have
\begin{align*}
0=\nabla_x\mathcal I=\int_{\mathcal B_1(0)}(B(x+ry,t)-\bar B(x,t,r))\cdot(\nabla_x B(x+ry,t)-\nabla_x\bar B(x,t,r))\phi(y)dy. 
\end{align*}
By the definition of $\bar B$ and the fact that $\nabla_x\bar B(x,t,r)$ is independent of $y$, 
\begin{align*}
0=\int_{\mathcal B_1(0)}(B(x+ry,t)-\bar B(x,t,r))\cdot(\nabla_x\bar B(x,t,r))\phi(y)dy
\end{align*}
and hence we obtain the following identity
\begin{align}\label{p7}
0=\int_{\mathcal B_1(0)}(B(x+ry,t)-\bar B(x,t,r))\cdot(\nabla_x B(x+ry,t))\phi(y)dy.
\end{align}
Upon integrating by parts, applying \eqref{p7} and using the fact that $\dis\nabla_x B(x+ry)=r^{-1}\nabla_y B(x+ry)$, we have
\begin{align*}
\mathcal A&=r^{-1}\int_{\mathcal B_1(0)}|B(x+ry,t)-\bar B(x,t,r)|^2 (u(x+ry,t)-u(x,t))\cdot\nabla\phi(y)dy.
\end{align*}
Using Proposition~2.2, $|B(x+ry,t)-\bar B(x,t,r)|^2$ is bounded by $\mathcal Kr^{2\alpha}f(t)^2$, and with the help of \eqref{1.2-2}, we have
\begin{align*}
\int_{\mathcal B_1(0)}|u(x+ry,t)-u(x,t)||\nabla\phi(y)|dy&\le \sup_{y\in \mathcal B_1(0)}\|\nabla\phi(y)\|_{L^\infty(\R^3)}k_1r^\beta g(t).
\end{align*}
Hence we obtain
\begin{align*}
\mathcal A\le \sup_{y\in \mathcal B_1(0)}\|\nabla\phi(y)\|_{L^\infty(\R^3)}\mathcal K k_1 r^{2\alpha+\beta-1}f(t)^2g(t)
\end{align*}
and \eqref{p2} follows. 

Next we show \eqref{p3}. First, we can rewrite the functional $\mathcal I$ into the form
\begin{align*}
\mathcal I(x,t,r)=\int\int_{\mathcal B_1(0)\times \mathcal B_1(0)}|B(x+ry,t)-B(x+rz,t)|^2\phi(y)\phi(z)dydz.
\end{align*}
Using the assumption that $\mathcal I(x,t,r)$ attains $f(t)^2r^{2\alpha}$ for the first time, we have $\nabla_x\mathcal I=0$ and $\Delta_x \mathcal I\le0$, and hence
\begin{align*}
0&\ge\Delta_x I(x,t,r)\\
&=2\mathcal D+2\int\int_{\mathcal B_1(0)\times \mathcal B_1(0)}|\nabla_x B(x+ry,t)-\nabla_x B(x+rz,t)|^2\phi(y)\phi(z)dydz,
\end{align*}
which implies
\begin{align}\label{p8}
\mathcal D&\le-\int\int_{\mathcal B_1(0)\times \mathcal B_1(0)}|\nabla_x B(x+ry,t)-\nabla_x B(x+rz,t)|^2\phi(y)\phi(x)dydz\\
&\le-r^{-2}\int\int_{\mathcal B_1(0)\times \mathcal B_1(0)}|\nabla_y B(x+ry,t)-\nabla_z B(x+rz,t)|^2\phi(y)\phi(z)dydz.\notag
\end{align}
We need to estimate the right side of \eqref{p8} in terms of $\mathcal I$ and $\mathcal I_r$. We notice that, by direct computation,
\begin{align}\label{p9}
2\mathcal I-r\partial_r \mathcal I&=\int\int_{\mathcal B_1(0)\times \mathcal B_1(0)}|B(x+ry,t)-B(x+rz,t)|^2\phi(y)\phi(z)dydz\notag\\
&\qquad-\int\int_{\mathcal B_1(0)\times \mathcal B_1(0)}(y\cdot\nabla_y B(x+ry,t)-z\cdot\nabla_z B(x+rz,t))\\
&\qquad\qquad\qquad\qquad\qquad\qquad\cdot(B(x+ry,t)-B(x+rz,t))\phi(y)\phi(z)dydz.\notag
\end{align}
The right hand side of \eqref{p9} can be estimated in the following way cf. [\cite{SV12} Lemma~3.5]:
\begin{align*}
&\int\int_{\mathcal B_1(0)\times \mathcal B_1(0)}|B(x+ry,t)-B(x+rz,t)|^2\phi(y)\phi(z)dydz\\
&-\int\int_{\mathcal B_1(0)\times \mathcal B_1(0)}(y\cdot\nabla_y B(x+ry,t)-z\cdot\nabla_z B(x+rz,t))\\
&\qquad\qquad\qquad\qquad\qquad\qquad\qquad\cdot(B(x+ry,t)-B(x+rz,t))\phi(y)\phi(z)dydz\\
&\le \mathcal C\left(\int\int_{\mathcal B_1(0)\times \mathcal B_1(0)}|\nabla_y B(x+ry,t)-\nabla_z B(x+rz,t)|^2\phi(y)\phi(z)\right)^\frac{1}{2}\\
&\qquad\qquad\times\left(\int\int_{\mathcal B_1(0)\times \mathcal B_1(0)}|B(x+ry,t)-B(x+rz,t)|^2\phi(y)\phi(z)\right)^\frac{1}{2}\\
&=\mathcal C\left(\int\int_{\mathcal B_1(0)\times \mathcal B_1(0)}|\nabla_y B(x+ry,t)-\nabla_z B(x+rz,t)|^2\phi(y)\phi(z)\right)^\frac{1}{2}\mathcal I^\frac{1}{2}
\end{align*}
for some constant $\mathcal C>0$, hence we obtain from \eqref{p9} that
\begin{align*}
-r^{-2}\int\int_{\mathcal B_1(0)\times \mathcal B_1(0)}|\nabla_y B(x+ry,t)-\nabla_z &B(x+rz,t)|^2\phi(y)\phi(z)dydz\\
&\le -\frac{r^{-2}}{\mathcal C^2}\frac{(2\mathcal I-r\partial_r\mathcal I)^2}{\mathcal I}.
\end{align*}
At the first point where $\mathcal I(x,t,r)=f(t)^2r^{2\alpha}$ holds, we should also have $\partial_r \mathcal I=2\alpha f(t)^2 r^{2\alpha-1}$. So we conclude
\begin{align}\label{p10}
-r^{-2}\int\int_{\mathcal B_1(0)\times \mathcal B_1(0)}|\nabla_y B(x+ry,t)-\nabla_z &B(x+rz,t)|^2\phi(y)\phi(z)dydz\notag\\
&\le-\frac{(1-\alpha)^2f(t)^2r^{2\alpha-2}}{\mathcal C^2}.
\end{align}
We apply \eqref{p10} on \eqref{p8}, and \eqref{p3} follows immediately.

Finally, we estimate $\mathcal E$ in \eqref{p1}. We note that the term $\mathcal E$ differs from the comparable term in \cite{SV12} and our estimate of $\mathcal E$ is the novelty of this current article. We subdivide $\mathcal E$ into $\mathcal E_1$ and $\mathcal E_2$:
\begin{align}\label{p4}
\mathcal E&=\int_{\mathcal B_1(0)}(B(x+ry,t)-\bar B(x,t,r))\cdot[B(x+ry,t)\cdot\nabla_x u(x+ry,t)]\phi(y)dy\notag\\
&=\int_{\mathcal B_1(0)}(B(x+ry,t)-\bar B(x,t,r))\cdot[\nabla_x u(x+ry,t)\cdot(B(x+ry,t)-\bar B(x,t,r))]\phi(y)dy\notag\\
&+\int_{\mathcal B_1(0)}(B(x+ry,t)-\bar B(x,t,r))\cdot[\nabla_x u(x+ry,t)\cdot\bar B(x,t,r))]\phi(y)dy\notag\\
&:=\mathcal E_1+\mathcal E_2.
\end{align}
Using \eqref{1.2-3}, the term $\mathcal E_1$ is bounded by
\begin{align}\label{p5}
\mathcal E_1&\le\left|\int_{\mathcal B_1(0)}(B(x+ry,t)-\bar B(x,t,r))\cdot[\nabla_x u(x+ry,t)\cdot(B(x+ry,t)-\bar B(x,t,r))]\phi(y)dy\right|\notag\\
&\le C_3f(t)^2r^{2\alpha}\int_{\mathcal B_1(0)}|\nabla_x u(x+ry,t)|\phi(y)dy\notag\\
&\le C_3f(t)^2r^{2\alpha}r^{2\alpha+\beta-1}g(t),
\end{align}
where $C_3>0$ is a constant. The term $\mathcal E_2$ can be estimated by
\begin{align*}
\mathcal E_2&\le\left|\int_{\mathcal B_1(0)}(B(x+ry,t)-\bar B(x,t,r))\cdot[\nabla_x u(x+ry,t)\cdot\bar B(x,t,r))]\phi(y)dy\right|\notag\\
&\le f(t)r^\alpha\left[\int_{\mathcal B_1(0)}|\nabla_x u(x+ry,t)|\phi(y)|dy\right]|\bar B(x,r,t)|\notag\\
&\le r^{\alpha+\beta-1}f(t)g(t)\left|\int_{\mathcal B_1(0)}B(x+rz,t)\phi(z)dz\right|\notag\\
&\le r^{\alpha+\beta-1}f(t)g(t)\left[\int_{\mathcal B_1(0)}|B(x+rz,t)-B(x+z,t)||\phi(z)|dz+\int_{\mathcal B_1(0)}|B(x+z,t)||\phi(z)|dz\right].
\end{align*} 
Using Proposition~2.2, the term $\dis\int_{\mathcal B_1(0)}|B(x+rz,t)-B(x+z,t)||\phi(z)|dz$ can be bounded by
\begin{align*}
\mathcal K\int_{\mathcal B_1(0)}f(t)|x+rz-xz|^\alpha|\phi(z)|dz\le \mathcal Kf(t)(r+1)^\alpha,
\end{align*}
while for the term $\dis\int_{\mathcal B_1(0)}|B(x+z,t)||\phi(z)|dz$, using \eqref{1.3-0} from Lemma~3.2, 
\begin{align*}
\int_{\mathcal B_1(0)}|B(x+z,t)||\phi(z)|dz&\le\|B(\cdot,t)\|_{L^2(\R^3)}\|\phi(\cdot)\|_{L^2(\mathcal B_1(0))}\\
&\le c\|B_0\|_{L^2(\R^3)}\|\phi(\cdot)\|_{L^2(\mathcal B_1(0))}.
\end{align*}
Hence we obtain
\begin{align}\label{p6}
\mathcal E_2\le C_4r^{\alpha+\beta-1}f(t)^2g(t)+C_5r^{2\alpha+\beta-1}f(t)^2g(t)+C_6r^{\alpha+\beta-1}f(t)g(t)
\end{align}
for some positive constants $C_4,C_5,C_6$. 

Putting the bounds \eqref{p2}, \eqref{p3}, \eqref{p5} and \eqref{p6} together,
\begin{align*}
f(t)'f(t)r^{2\alpha}&\le C_1 r^{2\alpha+\beta-1}f(t)^2 g(t)-C_2 r^{2\alpha-2}f(t)^2+C_3r^{2\alpha+\beta-1}f(t)^2g(t)\\
&+C_4r^{\alpha+\beta-1}f(t)^2g(t)+C_5r^{2\alpha+\beta-1}f(t)^2g(t)+C_6r^{\alpha+\beta-1}f(t)g(t).
\end{align*}
Hence there exists constants $M_1,M_2>0$ such that
\begin{align*}
f(t)^\frac{-2}{1+\beta-\alpha}\left[f(t)'f(t)^\frac{1-\beta+\alpha}{1+\beta-\alpha}-M_1g(t)^\frac{2}{1+\beta-\alpha}\right]\le M_2\left[g(t)^\frac{2}{1+\beta}+g(t)^\frac{2}{1+\beta-\alpha}\right].
\end{align*}
Define $G_1(t)=M_1g(t)^\frac{2}{1+\beta-\alpha}$, $G_2(t)=M_2\left[g(t)^\frac{2}{1+\beta}+g(t)^\frac{2}{1+\beta-\alpha}\right]$ and $F(t)=f(t)^\frac{2}{1+\beta-\alpha}$, by the assumption on $\alpha,\beta,g$ and the fact that $\max\{\frac{2}{1+\beta},\frac{2}{1+\beta-\alpha}\}<\frac{2}{\beta}=p$, we have $G_1,G_2\in L^1_t$ with $G_1,G_2>0$. We then obtain the following inequality
\begin{align}\label{ineq}
F(t)^{-1}\left(\frac{1+\beta-\alpha}{2}F(t)'-G_1(t)\right)\le G_2(t).
\end{align}
On the other hand, we can choose $F(t)$ to be the solution of the O.D.E.
\begin{align*}
\frac{1+\beta-\alpha}{2}F(t)'-G_1(t)=2F(t)G_2(t)
\end{align*}
given explicitly by
\begin{align*}
F(t)=\int_0^t \frac{2G_1(s)e^{\int_s^t \frac{4G_2(\tau)}{1+\beta-\alpha}d\tau}}{1+\beta-\alpha}ds+e^{\int_0^t \frac{4G_2(s)}{1+\beta-\alpha}ds}F(0),
\end{align*}
which contradicts the inequality \eqref{ineq}. Hence it is impossible to invalidate \eqref{p0} and we finish the proof of Theorem~1.1.
\end{proof}

% ------------------------------------------------------------------------

\subsection*{Acknowledgment}
We thank Vlad Vicol for very helpful discussions. This work was partially supported by NSF grant DMS-1207780 (SF).

% ------------------------------------------------------------------------
\end{document}